\newtheorem{theorem}{Theorem}[section]
\newtheorem{lemma}[theorem]{Lemma}
\newtheorem{corollary}[theorem]{Corollary}
\theoremstyle{definition}
\newtheorem{example}[theorem]{Example}
\newcommand{\vz}{\underline{z}}
\newcommand{\va}{\underline{a}}
\newcommand{\NN}{\mathbb{N}}
\begin{document}

\title[An improved bound in Wirsing's problem]{An improved bound in Wirsing's problem}

\author{Dmitry Badziahin, Johannes Schleischitz}


\thanks{Middle East Technical University, Northern Cyprus Campus, Kalkanli, G\"uzelyurt \\
    johannes.schleischitz@univie.ac.at}


\begin{abstract}
    We improve the lower bound for the classical exponent of approximation
    $w_{n}^{\ast}(\xi)$ connected to Wirsing's famous problem of
    approximation to real numbers by algebraic numbers of degree at most
    $n$. Our bound exceeds $n/\sqrt{3}\approx 0.5773n$ and thus provides a
    reasonable qualitative improvement to previous bounds of order
    $n/2+O(1)$. We further establish new relations between several
    classical exponents of approximation.
\end{abstract}

\maketitle

{\footnotesize{

{\em Keywords}: Wirsing's problem, exponents of Diophantine approximation,
parametric geometry of numbers\\
Math Subject Classification 2010: 11J13, 11J82, 11J83}}

\vspace{1mm}

\section{Wirsing's problem: Introduction and main results}  \label{int01}
In this paper we are concerned with approximation to a
transcendental real number $\xi$ by algebraic real numbers $\alpha$
of degree at most $n$. A classical setup is to relate the quality of
approximation $|\xi-\alpha|$ with the naive height $H(\alpha)$ of
the minimal polynomial of $\alpha$ over $\mathbb{Z}$ with coprime
coefficients, that is the maximum modulus of its coefficients. In
1961 Wirsing~\cite{wirsing} defined the quantity $w_{n}^{\ast}(\xi)$
as the supremum of $w^{\ast}$ for which the estimate
\[
| \xi-\alpha| < H(\alpha)^{-w^{\ast}-1}
\]
has infinitely many solutions in algebraic real numbers $\alpha$ of
degree at most $n$.

A longstanding open problem posed by Wirsing in~\cite{wirsing} is to decide
whether the quantity $w_{n}^{\ast}(\xi)$ is always bounded from below by $n$.
For $n=1$ this is true by Dirichlet's Theorem. In fact, by the theory of
continued fractions, the estimate $|\alpha-\xi|< cH(\alpha)^{-2}$ has
infinitely many solutions in rational numbers $\alpha=p/q$ (s.t.
$H(\alpha)=\max\{|p|,|q|\}$) for any $c>\max\{1,|\xi|\}/\sqrt{5}$,
see~\cite[Theorem~2F in Chapter I]{schmidt}. It was further verified for
$n=2$ in a paper of Davenport and Schmidt~\cite{davsh67} from 1967, who
similarly established an estimate of the form $|\alpha-\xi|< cH(\alpha)^{-3}$
with some explicit $c=c(\xi)$ for infinitely many rational or quadratic
irrational numbers. In fact, the numbers $\alpha$ can be chosen
quadratic irrationalities~\cite{moscj}. Furthermore, a combination of
Sprind\v{z}uk's famous result~\cite{sprindzuk} with ~\cite[(7)]{wirsing} implies
that almost all $\xi$ with respect to Lebesgue measure satisfy the identity
$w_{n}^{\ast}(\xi)=n$ for any $n\geq 1$. The identity also holds for any
algebraic number $\xi$ of degree larger than $n$ by an application of Schmidt
Subspace Theorem~\cite[Theorem~2.9]{bugbuch}. Apart from that, for $n\geq 3$
and general $\xi$, Wirsing's problem remains open.

It should be mentioned that a similar problem with respect to
approximation by algebraic integers was answered negatively. For the
case of approximation by cubic algebraic integers counterexamples
were found by Roy~\cite{roy}.

A bound of the form $w_{n}^{\ast}(\xi)\geq n/2+1-o(1)$ as
$n\to\infty$ was established by Wirsing himself in the same
paper~\cite{wirsing}. This had so far only been mildly improved by
some additive constant. Bernik and Tsishchanka~\cite{ber} were first
to improve the bound to an expression of order $n/2+2-o(1)$, and
this was refined in follow up papers by Tsishchanka, the
latest~\cite{Tsi07} contains the best currently known bound of order
$n/2+3-o(1)$ (as $n$ tends to infinity). In this paper we finally go
beyond the bound of order $n/2+O(1)$ by establishing the estimate
$w_{n}^{\ast}(\xi)/n>1/\sqrt{3}>0.57$. To state our main results in
a compact form let us define
\[
\overline{w}^{\ast}(\xi)= \limsup_{n\to\infty} \frac{w_{n}^{\ast}(\xi)}{n}, \qquad \underline{w}^{\ast}(\xi)= \liminf_{n\to\infty} \frac{w_{n}^{\ast}(\xi)}{n}.
\]
Then we show

\begin{theorem}  \label{1B}
    Let $n\geq 4$ be an integer. Let $\xi$ be any transcendental real number. Then
    we have
    \[
    w^{\ast}_{n}(\xi) > \frac{1}{\sqrt{3}}\cdot n= 0.5773\ldots n.
    \]
    In particular $\underline{w}^{\ast}(\xi) \geq 1/\sqrt{3}$.
    Moreover,
    \begin{equation} \label{eq:obenba}
    \overline{w}^{\ast}(\xi) \geq \delta,
    \end{equation}
    where $\delta=0.6408\ldots$ is given as $G(\gamma_{0})$ where
    \begin{equation} \label{eq:G}
    G(t)= \frac{4(t-t^{2})}{2t^{2}+2t-1+\sqrt{4t^{4}+24t^{3}-32t^{2}+12t+1}}
    \end{equation}
    and $\gamma_{0}$ is the root of $Q(t)=4t^4 - 12t^3 + 10t^2 - 6t + 1$
    in $t\in(0,1/2)$.
\end{theorem}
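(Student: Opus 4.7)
The plan is to work within the parametric geometry of numbers framework of Schmidt and Summerer, which translates the exponents $w_n(\xi)$ and $w_n^{\ast}(\xi)$ into combinatorial quantities built from the successive minima of a one-parameter family of convex bodies attached to $(1,\xi,\xi^2,\ldots,\xi^n)$. The starting point is the standard local estimate: if $P\in\mathbb{Z}[x]$ of degree $\leq n$ satisfies $|P(\xi)|\leq H(P)^{-w}$ and has a simple root $\alpha$ close to $\xi$, then
\[
|\xi-\alpha| \asymp \frac{|P(\xi)|}{|P'(\alpha)|},
\]
so that passing from polynomial to algebraic approximation requires a lower bound on $|P'(\alpha)|$, equivalently an upper bound on the product $\prod_{\beta\neq\alpha}|\alpha-\beta|$ over the other roots of $P$. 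Combined with the trivial Dirichlet input $w_n(\xi)\geq n$ and with $H(\alpha)\ll_n H(P)$, this yields the target bound for $w_n^{\ast}(\xi)$ provided one can quantitatively rule out the pathological case where $P$ has two roots clustered near $\xi$.

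The central obstruction is therefore to exclude the presence of a second root $\beta$ of $P$ very close to $\alpha$. To this end I would pass from a single polynomial to two consecutive near-best polynomials $P_1,P_2$ with heights $H_1\leq H_2$ and correspondingly small values at $\xi$. Either $P_1$ and $P_2$ share a common factor, in which case a lower-degree divisor already furnishes the required algebraic approximation, or their resultant is a nonzero integer and thus satisfies the Liouville-type lower bound $|\mathrm{Res}(P_1,P_2)|\geq 1$. Expanding the resultant as a product over pairs of roots of $P_1$ and $P_2$ then gives a lower bound on how close the near-$\xi$ roots of $P_1$ and $P_2$ can lie to each other, which feeds back into the sought-after lower bound on $|P'(\alpha)|$.

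The main obstacle is to organize this into a sharp quantitative optimization. The constraints from the local estimate, the resultant inequality, Minkowski's second theorem for the successive minima of the underlying lattice, and the definitions of $w_n(\xi)$ and $w_n^{\ast}(\xi)$ carve out an admissible region in a space of parameters (the ratio $\log H_2/\log H_1$, the scaled value $w_n(\xi)/n$, and proximity exponents for the relevant root pairs). Minimizing $w_n^{\ast}(\xi)/n$ over this region should yield exactly $1/\sqrt{3}$ as the extremum. The restriction $n\geq 4$ enters here because the linear system of inequalities only admits the required extremal configuration once the degree is large enough; for $n\leq 3$ the claim is anyway covered by the results cited in the introduction.

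For the limsup bound \eqref{eq:obenba}, I would exploit the additional freedom that along a suitable subsequence of $n$ one can enforce a more favorable configuration of the Schmidt--Summerer functions $\psi_i$. The corresponding optimization acquires a second parameter controlling the depth of this favorable configuration, and the condition for a critical point yields the quartic $Q(t)=4t^4-12t^3+10t^2-6t+1$ appearing in the statement. Its unique root $\gamma_0\in(0,1/2)$ is the optimizing value, and substituting into \eqref{eq:G} recovers $\delta=G(\gamma_0)=0.6408\ldots$.
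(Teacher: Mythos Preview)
Your proposal describes the classical Wirsing--Tsishchanka strategy: pick near-best polynomials $P_1,P_2$, use the root estimate $|\xi-\alpha|\asymp |P(\xi)|/|P'(\alpha)|$, and control root clustering via the resultant bound $|\mathrm{Res}(P_1,P_2)|\geq 1$. This is precisely the machinery that, after considerable effort, produces bounds of the form $w_n^{\ast}(\xi)\geq n/2+O(1)$; see Wirsing~\cite{wirsing} and Tsishchanka~\cite{Tsi07}. You then assert that optimizing over your parameter region ``should yield exactly $1/\sqrt{3}$'', but nothing in the outline explains what new input breaks the $1/2$ barrier. Invoking Minkowski's second theorem and the Schmidt--Summerer functions does not by itself change the picture, since the resultant inequality and the local root estimate are the binding constraints, and they balance at $n/2$. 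As it stands the proposal is the old argument with an unjustified claim about the optimum.

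The paper's proof is structurally different and does not analyze roots or resultants at all. It pivots on the uniform simultaneous exponent $\widehat{\lambda}_n(\xi)$. If $\widehat{\lambda}_n(\xi)$ is small, the Davenport--Schmidt inequality $w_n^{\ast}(\xi)\geq 1/\widehat{\lambda}_n(\xi)$ already gives a large bound. If $\widehat{\lambda}_n(\xi)$ is large, then for a suitable $m<(n-1)/2$ the paper shows, via a Hankel-matrix/linear-independence lemma in the spirit of Laurent~\cite{laurent} combined with Schmidt--Summerer inequalities, that
\[
\widehat{w}_{n-m}(\xi)\ \geq\ \frac{(n-m)\widehat{\lambda}_n(\xi)+n-2m-1}{1-m\widehat{\lambda}_n(\xi)},
\]
and then feeds this into the Bugeaud--Schleischitz inequality $w_{n-m}^{\ast}(\xi)\geq \tfrac{3}{2}\widehat{w}_{n-m}(\xi)-(n-m)+\tfrac{1}{2}$. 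Balancing the two regimes over $\widehat{\lambda}_n(\xi)$ and then optimizing the free integer $m\approx \tfrac{3-\sqrt{3}}{6}n$ is what produces $1/\sqrt{3}$; the quartic $Q(t)$ for the limsup claim arises from the analogous two-scale optimization in $n$ and $n-m$. None of these three ingredients --- the dimension drop from $n$ to $n-m$ via linearly independent shifted best-approximation vectors, the transference to $\widehat{w}_{n-m}$, and the inequality from~\cite{buschlei} --- appears in your outline, and without at least one of them there is no mechanism to exceed $n/2+O(1)$.
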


Clearly if Wirsing's problem has a positive answer then
$\underline{w}^{\ast}(\xi)\geq 1$ for any transcendental real $\xi$.
However, it seems that there is no easy argument available to deduce
any lower bound better than $1/2$ even for the larger quantity
$\overline{w}^{\ast}(\xi)$.
Theorem~\ref{1B} follows from optimization of $m$ in the following
result.

\begin{theorem} \label{1A}
    Let $n\geq 4$ be an integer
    and $\xi$ be a transcendental real number. Then for any $1\leq m<
    (n-1)/2$ one has
    \[
    w_{n}^{\ast}(\xi) \geq \frac{4mn+6n-4m^{2}-8m}{2m+2-n+\sqrt{n^{2}+12mn+20n-12m^{2}-24m+4}}.
    \]
\end{theorem}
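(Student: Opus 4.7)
The plan is to follow the classical Davenport--Wirsing framework, refined through the lens of parametric geometry of numbers. For a transcendental $\xi$ and a large parameter $X$, Minkowski's theorem produces an integer polynomial $P$ of degree at most $n$ with $H(P)\leq X$ and $|P(\xi)|\ll X^{-n}$. Writing $\alpha$ for the root of $P$ closest to $\xi$, the elementary bound $|\xi-\alpha|\leq n|P(\xi)|/|P'(\alpha)|$ reduces the problem to a lower estimate on $|P'(\alpha)|$. The crude bound $|P'(\alpha)|\gg H(P)^{1-\varepsilon}$ coming from the remaining conjugates delivers only Wirsing's $n/2$-exponent; the improvement will come from exploiting the arithmetic structure of $P$.

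The parameter $m$ enters as a threshold on the degree of factors of $P$. First, if $P$ admits a factor $Q\in\mathbb{Z}[X]$ of degree at most $m$ with a root near $\xi$, Gelfond's lemma yields $H(Q)\leq 2^{n}H(P)$, and $|Q(\xi)|$ is controlled through $|P(\xi)|$ together with the resultant of $Q$ with the cofactor $P/Q$; this produces an algebraic approximation $\alpha$ of small degree whose quality is directly computable. Secondly, if no such small-degree factor exists, then $\alpha$ has degree exceeding $m$ and its conjugates cannot cluster too close to $\xi$, improving the lower estimate on $|P'(\alpha)|$ and hence on $|\xi-\alpha|$. The two regimes must be made to produce comparable exponents, and this is where $m$ is to be chosen.

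The quantitative glue is a resultant inequality applied to two consecutive best approximation polynomials $P_{i},P_{i+1}$ in the Wirsing sequence. When they are coprime, the integer $\mathrm{Res}(P_{i},P_{i+1})$ has absolute value at least one and at most a Hadamard-type product of heights and evaluations at $\xi$, forcing a sharp relation between $H(P_{i}), H(P_{i+1}), |P_{i}(\xi)|, |P_{i+1}(\xi)|$. Combined with the gap principle for best approximations (which in the parametric geometry of numbers setup corresponds to a bound on the growth of successive minima of the twisted bodies attached to $\xi$), this supplies the missing control on the growth rate of heights, which then feeds into the root separation estimate in each of the two regimes determined by $m$.

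The main obstacle will be the bookkeeping behind the precise algebraic expression on the right-hand side. The square root in the denominator reflects a quadratic optimization where the two exponents arising from the dichotomy on $\deg Q$ must be balanced exactly, and the range $1\leq m<(n-1)/2$ is the interval on which both the resultant inequality and the Gelfond factorization argument remain simultaneously meaningful: outside it, one regime degenerates and the classical $n/2$ bound takes over. Keeping the estimates uniform in $\xi$ and carrying error terms through the case analysis is the delicate step; the resulting inequality is then optimized over $m$ in a subsequent step to deduce Theorem~\ref{1B}.
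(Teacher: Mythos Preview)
Your plan describes the classical Wirsing--Bernik--Tsishchanka machinery: take best approximation polynomials, factor them, apply a resultant inequality between consecutive ones, and balance two cases according to whether a factor of degree $\leq m$ carries a root near $\xi$. That framework is well explored and, as the paper's introduction notes, has only ever produced bounds of order $n/2+O(1)$. Nothing in your sketch explains what new input would break that barrier, and the specific expression with the square root does not emerge from the dichotomy you describe: in the factorization approach the two regimes compare an exponent coming from a degree-$m$ approximation against one coming from root separation of a degree-$(n-m)$ polynomial, and equating those does not give the stated quadratic. The invocation of ``parametric geometry of numbers'' is decorative here; you never use successive minima of parametric bodies in any concrete way.

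The paper's argument is entirely different and does not touch resultants, Gelfond's lemma, or root separation. The dichotomy is on the value of the uniform simultaneous exponent $\widehat{\lambda}_{n}(\xi)$. If $\widehat{\lambda}_{n}(\xi)$ is small, the Davenport--Schmidt inequality $w_{n}^{\ast}(\xi)\geq 1/\widehat{\lambda}_{n}(\xi)$ already suffices. If $\widehat{\lambda}_{n}(\xi)>1/(n-m)$, a Hankel-matrix argument in the style of Laurent shows that each best approximation vector for $(\xi,\ldots,\xi^{n})$ yields $m+1$ linearly independent good approximations to $(\xi,\ldots,\xi^{n-m})$; via Schmidt--Summerer transference this forces $\widehat{w}_{n-m}(\xi)\geq \bigl((n-m)\widehat{\lambda}_{n}(\xi)+n-2m-1\bigr)/\bigl(1-m\widehat{\lambda}_{n}(\xi)\bigr)$. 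The Bugeaud--Schleischitz inequality $w_{n-m}^{\ast}(\xi)\geq \tfrac{3}{2}\widehat{w}_{n-m}(\xi)-(n-m)+\tfrac{1}{2}$ then converts this into a lower bound for $w_{n}^{\ast}(\xi)$ that increases with $\widehat{\lambda}_{n}(\xi)$. Equating the two bounds gives the quadratic \eqref{eq24} whose solution is the stated expression. None of these three ingredients --- the Hankel independence lemma, the Schmidt--Summerer estimate for successive minima exponents, and the inequality $w^{\ast}\geq \tfrac{3}{2}\widehat{w}-n+\tfrac{1}{2}$ --- appears in your plan, and without them the argument does not go through.
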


%

A slight improvement of the bound can be derived by our method.
However, the resulting bound is a root of a complicated
cubic polynomial and
the refinement is too insignificant to improve on the factors
$1/\sqrt{3}$ and $\delta$ of Theorem~\ref{1B}.
See the comments below the proof for details. It is also worth
noting that for $n\leq 24$, the bound by Tsishchanka~\cite{Tsi07}
for $w_{n}^{\ast}(\xi)$ is better. The table below compares the
bound of~\cite{Tsi07} with those from Theorem~\ref{1A} with suitable $m$
for some
particular values of $n$.

\begin{center}
    \begin{tabular}{ |c|c|c| }
        \hline
        n & Tsi & BS \\ \hline
        3 & 2.73 & - \\
        4 & 3.45 & 2.64 \\
        5 & 4.14 & 3.34 \\
        10 & 7.06 & 6.42 \\
        20 & 12.39 & 12.16   \\
        24 & 14.46 & 14.46   \\
                25 & 14.98 & 15.04   \\
        30 & 17.55 & 17.92   \\
        50 & 27.70 & 29.46 \\
        100 & 52.84 & 58.32 \\
        1000 & 502.98 & 577.92 \\
        \hline
    \end{tabular}
\end{center}

While other approaches to Wirsing's problem rely on counting
algebraic numbers in small intervals, see for instance the recent
preprint by Bernik, Goetze and Kalosha~\cite{bgk}, our result relies
solely on relations between different exponents of Diophantine
approximation defined in Section~\ref{e} below. Thereby we build up on ideas
of Wirsing~\cite{wirsing}, Davenport and Schmidt~\cite{davsh} and
Laurent~\cite{laurent}.

For variants of Wirsing's problem that have been studied,
including prescribing
the degree of $\alpha$ as equal to $n$ (see~\cite{buteu}) or considering
algebraic integers $\alpha$ of degree $n+1$ as in~\cite{roy}, our method
does not apply. The concrete obstruction is identified in Section~\ref{twor}.
Nevertheless we conjecture that the claims remain true.

\section{Other classical exponents of approximation}

\subsection{Exponents of Diophantine approximation} \label{e}

 Apart from $w_{n}^{\ast}(\xi)$ itself, the most important exponents in this paper are $\widehat{\lambda}_{n}(\xi)$,
 defined as the supremum of $\lambda$ such that the inequalities
 \begin{equation} \label{eq:tierekt}
 1\leq x\leq X, \qquad
 L(\underline{x}):=\max_{1\leq j\leq n} \vert \zeta^{j}x-y_{j}\vert\leq X^{-\lambda},
 \end{equation}
 have an integer vector solution $\underline{x}=(x,y_{1},\ldots,y_{n})$ for all large
 $X$. An easy application of the Dirichlet's theorem implies that $\widehat{\lambda}_{n}(\xi)$ is bounded below by
 $1/n$. On the other hand, Davenport and Schmidt~\cite{davsh} verified that
 $\widehat{\lambda}_{n}(\xi)$ does not exceed $2/n$. Thus it may vary only up to a factor $2$.
 Slight improvements of the upper bound for odd $n$
 by Laurent~\cite{laurent}
 and for even $n$ by Schleischitz~\cite{equprin, indag, js} were obtained later.
 See also Roy~\cite{roy3} for $n=3$. Note that
 it follows from Davenport and Schmidt~\cite[Lemma~1]{davsh}
  that any improvement of the factor $2$ separating the upper bound
  from the trivial lower bound $1/n$
  would directly
 lead to an improvement of the factor $1/2$ in the Wirsing's problem (as we
 establish in this paper), see Section~\ref{twor}.
 While we are unable to provide such improvements for $\widehat{\lambda}_{n}(\xi)$,
 the underlying estimate of Davenport and Schmidt is a crucial ingredient in our argument.

 We will sporadically make reference to the ordinary exponents
 $\lambda_{n}(\xi)$ defined similarly, but where we
 impose that \eqref{eq:tierekt} has a solution for some arbitrarily large
 values of $X$. This weaker condition is reflected in $\lambda_{n}(\xi)\geq \widehat{\lambda}_{n}(\xi)$.
 We will further employ the dual linear form exponents
 $w_{n}(\xi), \widehat{w}_{n}(\xi)$ defined as the supremum
 of $w$ so that the system
 \[
 1\leq \max_{1\leq j\leq n} |a_{j}|\leq X, \qquad |a_{0}+\xi a_{1}+\cdots+\xi^{n}a_{n}|\leq X^{-w}
 \]
 has a solution in integers $a_{0},\ldots,a_{n}$ for arbitrarily
 large $X$ and all sufficiently large $X$, respectively. These exponents also
 satisfy $w_{n}(\xi)\geq \widehat{w}_{n}(\xi)\geq n$ by the Dirichlet box principle,
 and again in~\cite{davsh} Davenport and Schmidt found the upper bound
 $2n-1$ for the uniform exponent $\widehat{w}_{n}(\xi)$, as well as an improved
 bound for $n=2$ which turned out to be sharp~\cite{royjl}. Again, as for
 $\widehat{\lambda}_{n}(\xi)$, the upper and lower bounds roughly differ by a
 factor of 2 which for large $n$ has not been improved so far. However,
 refinements in the constant term were made first
 by Bugeaud and Schleischitz~\cite{buschlei}. The proof strategy
 in~\cite{buschlei}, in the light of later
 findings~\cite{unif, mamo}, in turn yields slightly stronger bounds,
 in particular $\widehat{w}_{n}(\xi)\le 2n-2$
 for $n\ge 10$.
 See also~\cite{acta2018}, where a conjectural bound of
 order $(1+1/\sqrt{2})n-o(1)<1.71n$ was motivated as well.
 Again, while we do not improve the bounds for the exponent $\widehat{w}_{n}(\xi)$, another estimate from~\cite{buschlei} linking
 it with $w_{n}^{\ast}(\xi)$
 is essential for this paper.

\subsection{New relations between classical exponents}

On the way to the main results we establish the following connections between
various exponents of approximation which are of some
independent interest.

\begin{theorem}  \label{openup2}
    Let $m,n,\xi$ be as in Theorem~\ref{1A} and assume
    \begin{equation}  \label{eq:only2}
    \widehat{\lambda}_{n}(\xi) > \frac{1}{n-m}.
    \end{equation}
    Then we have
\begin{equation} \label{eq:h11}
\widehat{w}_{n-m}(\xi)\geq
\frac{(n-m)\widehat{\lambda}_{n}(\xi)+n-2m-1}{1-m\widehat{\lambda}_{n}(\xi)}.
\end{equation}
Moreover
	\begin{equation} \label{eq:h21}
	w_{n-m}(\xi)\geq \max\left\{\frac{(n-m)\widehat{\lambda}_{n}(\xi)+n-2m-2}{1-(m+1)\widehat{\lambda}_{n}(\xi)},\; \frac{(n-m)\lambda_{n}(\xi)+n-2m-1}{1-m\lambda_{n}(\xi)}\right\},
	\end{equation}
	and conversely
	\begin{equation} \label{eq:verynew}
	w_{n-m}(\xi) \leq \frac{n-m-1}{m+1}\cdot \frac{(n-m)\widehat{\lambda}_{n}(\xi)+n-2m-1}{(n-m)\widehat{\lambda}_{n}(\xi)-1}.
	\end{equation}
    Finally,
    \begin{equation} \label{eq:windag}
    w_{m+1}(\xi) \leq \frac{1}{\widehat{\lambda}_{n}(\xi)}<n-m.
    \end{equation}
\end{theorem}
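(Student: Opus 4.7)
The bound $1/\widehat{\lambda}_{n}(\xi) < n-m$ is immediate from hypothesis \eqref{eq:only2}. For the substantive inequality $w_{m+1}(\xi) \le 1/\widehat{\lambda}_{n}(\xi)$ I argue by contradiction. Suppose $w_{m+1}(\xi) > 1/\widehat{\lambda}_{n}(\xi)$; fix $w$ with $1/\widehat{\lambda}_{n}(\xi) < w < w_{m+1}(\xi)$, and then $\lambda < \widehat{\lambda}_{n}(\xi)$ slightly below $\widehat{\lambda}_{n}(\xi)$ so that both $w\lambda > 1$ and $\lambda > 1/(n-m)$ hold. Choose $s \in (1/\lambda, \min\{w, n-m\})$; this interval is nonempty by these selections.

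For each sufficiently large $X$, the definition of $w_{m+1}(\xi)$ provides an integer polynomial $P(T) = \sum_{j=0}^{m+1} a_{j}T^{j}$ with $H(P) \le X$ and $|P(\xi)| \le X^{-w}$; after passing to a subsequence I may assume $H(P) \asymp X$. Applying $\widehat{\lambda}_{n}(\xi) > \lambda$ at $Y := X^{s}$ yields integers $x, y_{1}, \ldots, y_{n}$ with $1 \le x \le Y$ and $|x\xi^{j} - y_{j}| \le Y^{-\lambda}$ for $j = 1, \ldots, n$. Setting $y_{0} := x$, I form
\[
N_{k} := \sum_{j=0}^{m+1} a_{j} y_{k+j} \in \mathbb{Z}, \qquad k = 0, 1, \ldots, n-m-1.
\]
The routine bounds $|N_{k} - x\xi^{k}P(\xi)| \le (m+2)XY^{-\lambda}$ and $|x\xi^{k}P(\xi)| \le C_{\xi}YX^{-w}$ give $|N_{k}| = O(X^{1-s\lambda} + X^{s-w})$, with both exponents negative by the choice of $s$; hence $N_{k} = 0$ for all large $X$.

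The vanishing $N_{0} = \cdots = N_{n-m-1} = 0$ expresses that the integer tuple $(y_{0}, y_{1}, \ldots, y_{n})$ is annihilated by the linear recurrence of order $m+1$ with characteristic polynomial $P(T)$. The crux of the argument — and its main obstacle — is to combine this structural fact with the approximations $y_{k} \approx x\xi^{k}$ and the transcendence of $\xi$ in order to derive a lower bound of the form $x \ge c(\xi, m, n) \cdot H(P)^{n-m}$. The mechanism is transparent in the prototype case $m = 0$: for $P(T) = qT - p$ with $\gcd(p,q) = 1$, the relations reduce to $q y_{k+1} = p y_{k}$, forcing $y_{k} = (p/q)^{k} x$, and integrality of $y_{1}, \ldots, y_{n}$ compels $q^{n} \mid x$, whence $x \ge q^{n} \asymp H(P)^{n}$. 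For general $m$ the analogous assertion requires a more delicate analysis of the $\mathbb{Z}$-module of integer solutions of the recurrence — for instance via the companion matrix of $P$ or, equivalently, factoring $P$ over its splitting field and tracking the Vandermonde structure — so as to propagate divisibility and size constraints across the $n-m$ iterations of the recurrence.

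Once the arithmetic bound $x \ge c \cdot H(P)^{n-m} \asymp X^{n-m}$ is secured, combining it with $x \le Y = X^{s}$ forces $s \ge n-m$, contradicting the choice $s < n-m$. This contradiction establishes $w_{m+1}(\xi) \le 1/\widehat{\lambda}_{n}(\xi)$ and completes the proof.
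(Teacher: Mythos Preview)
Your proposal addresses only \eqref{eq:windag}, leaving \eqref{eq:h11}, \eqref{eq:h21} and \eqref{eq:verynew} entirely untreated. In the paper these three are the substance of the theorem: they are obtained from Corollary~\ref{ccoo} (itself a consequence of Lemma~\ref{le1A} on the linear independence of shifted minimal points) fed into the Schmidt--Summerer machinery of Section~\ref{pgn}, via Mahler duality \eqref{eq:mahla} and the inequalities \eqref{eq:sses}, \eqref{eq:untob}. The paper then deduces \eqref{eq:windag} in two lines by combining the already established \eqref{eq:h11} with the inequality
\[
\min\{ w_{m+1}(\xi), \widehat{w}_{n-m}(\xi)\} \leq \frac{1}{\widehat{\lambda}_{n}(\xi)}
\]
quoted from~\cite{indag}: since \eqref{eq:h11} shows $\widehat{w}_{n-m}(\xi)>1/\widehat{\lambda}_{n}(\xi)$, the minimum must be attained by $w_{m+1}(\xi)$. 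Your attempt thus misses the main content of the theorem and takes a completely different route for the one part it does treat.

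Even restricted to \eqref{eq:windag}, the argument has a genuine gap precisely where you flag it. The setup is sound: for a good polynomial $P$ of degree $\le m+1$ and a good simultaneous approximation $(x,y_{1},\ldots,y_{n})$ at a suitably linked scale, the integers $N_{k}$ do vanish, so $(y_{0},\ldots,y_{n})$ satisfies the recurrence with characteristic polynomial $P$. But the asserted conclusion $x\ge c\,H(P)^{n-m}$ is not established. Your $m=0$ argument is correct and special: for $P=qT-p$ with $\gcd(p,q)=1$ the recurrence forces $q^{n}\mid x$, and since $p/q\to\xi$ one has $|q|\asymp H(P)$. For $m\ge 1$ no such divisibility is available in general; if $P$ is monic, for example, the recurrence imposes no congruence on $y_{0}$ whatsoever, and the lower bound must come instead from the interaction of the $(m+1)$-dimensional solution lattice of the recurrence with the constraint $|y_{k}-x\xi^{k}|\le X^{-s\lambda}$. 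Making that interaction quantitative is exactly the difficulty, and appealing to ``the companion matrix of $P$'' or ``the Vandermonde structure'' does not by itself produce the required bound. As it stands this step is a hope rather than a proof, and the argument does not go through without it.
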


In fact, we only require \eqref{eq:h11} for the proof of Theorems~\ref{1B}
and~\ref{1A}. The bounds \eqref{eq:h11}, \eqref{eq:h21} are increasing in
$\widehat{\lambda}_{n}(\xi)$ and non-trivial (i.e. exceed $n-m$). We remark
that a very similar argument would lead to the estimate $w_{\lfloor
n/2\rfloor}(\xi)\leq 1/\widehat{\lambda}_{n}(\xi)$ if
$\widehat{\lambda}_{n}(\xi)>\lceil n/2\rceil^{-1}$ (note that it is an upper
bound here), which leads to a contradiction, in view of the reverse estimate
$w_{\lfloor n/2\rfloor}(\xi)\geq \lfloor n/2\rfloor$. This would yield an
alternative proof of the bounds for $\widehat{\lambda}_{n}(\xi)$
in~\cite{laurent}. We want to state the special case $n=4, m=1$ where much
cancellation occurs as a corollary.

\begin{corollary} \label{4fall}
    Let $\xi$ be real transcendental with $\widehat{\lambda}_{4}(\xi)>1/3$.
    We have
	\[
	\frac{3\widehat{\lambda}_{4}(\xi)+1}{3\widehat{\lambda}_{4}(\xi)-1}\geq w_{3}(\xi) \geq \max\left\{ \frac{3\widehat{\lambda}_{4}(\xi)}{1-2\widehat{\lambda}_{4}(\xi)}, \frac{3\lambda_{4}(\xi)+1}{1-\lambda_{4}(\xi)}\right\},
	\qquad \widehat{w}_{3}(\xi)\geq
	\frac{3\widehat{\lambda}_{4}(\xi)+1}{1-\widehat{\lambda}_{4}(\xi)},
	\]    %
    and
    \[
    w_{2}(\xi)\leq \frac{1}{\widehat{\lambda}_{4}(\xi)}<3.
    \]
\end{corollary}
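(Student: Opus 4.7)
The plan is to obtain Corollary~\ref{4fall} directly by specialising Theorem~\ref{openup2} to the case $n=4$, $m=1$. First I would check that the hypotheses line up: the parameter range $1 \leq m < (n-1)/2$ inherited from Theorem~\ref{1A} becomes $1 \leq 1 < 3/2$, which holds, and the assumption \eqref{eq:only2} reads $\widehat{\lambda}_4(\xi) > 1/(n-m) = 1/3$, which is exactly the standing hypothesis of the corollary.

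Next I would substitute $n=4$, $m=1$ into each of \eqref{eq:h11}, \eqref{eq:h21}, \eqref{eq:verynew} and \eqref{eq:windag} and perform the elementary simplifications. In \eqref{eq:h11} the numerator $(n-m)\widehat{\lambda}_n + n - 2m - 1$ collapses to $3\widehat{\lambda}_4(\xi) + 1$ and the denominator to $1 - \widehat{\lambda}_4(\xi)$, giving the asserted lower bound on $\widehat{w}_3(\xi)$. In \eqref{eq:h21} the constants $n - 2m - 2$ and $n - 2m - 1$ evaluate to $0$ and $1$, so the two inner expressions reduce to $3\widehat{\lambda}_4(\xi)/(1 - 2\widehat{\lambda}_4(\xi))$ and $(3\lambda_4(\xi) + 1)/(1 - \lambda_4(\xi))$ respectively. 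In \eqref{eq:verynew} the key cancellation is the prefactor $(n-m-1)/(m+1) = 2/2 = 1$, leaving only the single fraction $(3\widehat{\lambda}_4(\xi) + 1)/(3\widehat{\lambda}_4(\xi) - 1)$. Finally \eqref{eq:windag} with $m+1 = 2$ yields precisely $w_2(\xi) \leq 1/\widehat{\lambda}_4(\xi) < 3$.

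There is essentially no obstacle; the corollary is a pure substitution with favourable arithmetic cancellations. The only sanity check worth carrying out is to verify that the resulting upper and lower bounds on $w_3(\xi)$ are mutually compatible on the admissible range $\widehat{\lambda}_4(\xi) \in (1/3, 1/2]$. A comparison at the Davenport--Schmidt endpoint $\widehat{\lambda}_4(\xi) = 1/2$ shows the lower bound from $3\widehat{\lambda}_4/(1-2\widehat{\lambda}_4)$ diverges while the upper bound remains finite at $5$; as the authors note in the remark preceding the corollary, this discrepancy is a feature, not a bug, since it is exactly the mechanism by which such inequalities could (in principle) sharpen the upper bound on $\widehat{\lambda}_4(\xi)$. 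Hence writing the proof amounts to little more than invoking Theorem~\ref{openup2} and displaying the simplified formulas.
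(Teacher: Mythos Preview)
Your proposal is correct and matches the paper's approach exactly: the paper presents Corollary~\ref{4fall} explicitly as the special case $n=4$, $m=1$ of Theorem~\ref{openup2} (``where much cancellation occurs''), and your substitutions into \eqref{eq:h11}--\eqref{eq:windag} are all accurate. One cosmetic remark: the compatibility discussion you attribute to a remark \emph{preceding} the corollary actually appears in the paper immediately \emph{after} it, but this does not affect the argument.
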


Comparing the left lower and the upper bound for $w_{3}(\xi)$ gives
$\widehat{\lambda}_{4}(\xi)\leq (\sqrt{19}+2)/15=0.4239\ldots$, which is
however weaker than the best known bound $0.3706\ldots$ from~\cite{js}
(a weaker bound
in~\cite{equprin} differs only in the fifth decimal digit).
The same method can be applied to any even $n$ and $m=n/2-1$. Then
Theorem~\ref{openup2} yields that in the case $\widehat{\lambda}_{n}(\xi) >
\frac{2}{n+2}$ one has
\[
\max\left\{\frac{(n+2)\widehat{\lambda}_{n}(\xi)}{2-n\widehat{\lambda}_{n}(\xi)},\; \frac{(n+2)\lambda_{n}(\xi)+2}{2-(n-2)\lambda_{n}(\xi)}\right\}\leq w_{\frac{n}{2}+1}(\xi) \leq \frac{(n+2)\widehat{\lambda}_{n}(\xi)+2}{(n+2)\widehat{\lambda}_{n}(\xi)-2}.
\]
This further implies $\widehat{\lambda}_{n}(\xi)\leq 2/n-(4/3+o(1))n^{-2}$ as
$n\to\infty$, however again larger than the bound
in~\cite[Theorem~4.1]{equprin} of order $2/n-(3.18\ldots+o(1))n^{-2}$.

As for the exponent $w_{n}^{\ast}$, we define the upper limits
\[
\overline{\widehat{w}}(\xi) = \limsup_{n\to\infty} \frac{\widehat{w}_{n}(\xi)}{n}, \qquad \overline{\widehat{\lambda}}(\xi) = \limsup_{n\to\infty} n\widehat{\lambda}_{n}(\xi),
\]
and accordingly, the lower limits $\underline{\widehat{w}}(\xi)$
and $\underline{\widehat{\lambda}}(\xi)$. These
quantities all lie in the interval $[1,2]$, see Section~\ref{e}. Another
consequence of Theorem~\ref{openup2} reads
\begin{corollary} \label{comp}
    For any transcendental real number $\xi$ we have
    \[
    \overline{\widehat{w}}(\xi) \geq \frac{1-2\cdot R\left(\overline{\widehat{\lambda}}(\xi) \right)}{1-\left(\overline{\widehat{\lambda}}(\xi)+1\right)\cdot R\left(\overline{\widehat{\lambda}}(\xi) \right) +\overline{\widehat{\lambda}}(\xi)\cdot R\left(\overline{\widehat{\lambda}}(\xi) \right)^{2}} =: S\left(\overline{\widehat{\lambda}}(\xi) \right), \]
    and similarly
    \[
    \underline{\widehat{w}}(\xi) \geq \frac{1-2\cdot R\left(\underline{\widehat{\lambda}}(\xi) \right)}{1-\left(\underline{\widehat{\lambda}}(\xi)+1\right)\cdot R\left(\underline{\widehat{\lambda}}(\xi) \right) +\underline{\widehat{\lambda}}(\xi)\cdot R\left(\underline{\widehat{\lambda}}(\xi) \right) ^{2}} =: S\left(\underline{\widehat{\lambda}}(\xi) \right)
    \]
    where the function $R(t)$ is given as
    \[
    R(t)= \frac{t-\sqrt{2t-t^{2}}}{2t}.
    \]
\end{corollary}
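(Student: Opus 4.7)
The plan is to specialise \eqref{eq:h11} of Theorem~\ref{openup2} to $m=\lfloor rn\rfloor$ for a parameter $r\in(0,1/2)$, let $n\to\infty$ along subsequences on which $\mu_n:=n\widehat{\lambda}_n(\xi)$ converges to the desired limit $\overline{\widehat{\lambda}}(\xi)$ or $\underline{\widehat{\lambda}}(\xi)$, and optimise over $r$.

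Dividing \eqref{eq:h11} by $n-m$ and recalling $\widehat{\lambda}_n(\xi)=\mu_n/n$, the leading-order behaviour as $n\to\infty$ with $m/n\to r$ gives
\[
\frac{\widehat{w}_{n-m}(\xi)}{n-m}\;\ge\;\frac{(1-r)\mu_n+n(1-2r)+O(1)}{n(1-r)(1-r\mu_n)+O(1)}\;=\;f_{\mu_n}(r)+o(1),
\]
where
\[
f_\mu(r)\;:=\;\frac{1-2r}{(1-r)(1-r\mu)}\;=\;\frac{1-2r}{1-(1+\mu)r+\mu r^{2}}.
\]
The map $\mu\mapsto f_\mu(r)$ is increasing on $[1,2]$ for fixed admissible $r$, as follows from $\partial_\mu f_\mu(r)=(1-2r)r/[(1-r)(1-r\mu)^{2}]\ge0$. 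Differentiating $f_\mu$ in $r$ yields the quadratic $2\mu r^{2}-2\mu r+(\mu-1)=0$, whose unique root in $(0,1/2)$ for $\mu\in(1,2)$ is
\[
R(\mu)\;=\;\frac{\mu-\sqrt{2\mu-\mu^{2}}}{2\mu},
\]
and a direct substitution gives $f_\mu(R(\mu))=S(\mu)$ by comparison of the two displayed formulas.

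For $\overline{\widehat{w}}(\xi)$ I would take $n_k\to\infty$ with $\mu_{n_k}\to\overline{\mu}:=\overline{\widehat{\lambda}}(\xi)$, set $m_k:=\lfloor R(\overline{\mu})n_k\rfloor$ and $n_k':=n_k-m_k\to\infty$; the display then yields $\widehat{w}_{n_k'}(\xi)/n_k'\ge S(\overline{\mu})+o(1)$, whence $\overline{\widehat{w}}(\xi)\ge S(\overline{\mu})$. For $\underline{\widehat{w}}(\xi)$, fix $\epsilon>0$ so that $\mu_n\ge\underline{\mu}-\epsilon$ holds for all large $n$, with $\underline{\mu}:=\underline{\widehat{\lambda}}(\xi)$; for every large target integer $n'$, choose $n$ to be the integer nearest to $n'/(1-R(\underline{\mu}-\epsilon))$ and $m:=n-n'$, and apply \eqref{eq:h11} together with the monotonicity of $f_\mu$ in $\mu$ to obtain $\widehat{w}_{n'}(\xi)/n'\ge S(\underline{\mu}-\epsilon)+o(1)$; sending $\epsilon\to0$ and invoking continuity of $S$ gives $\underline{\widehat{w}}(\xi)\ge S(\underline{\mu})$.

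The chief technical obstacle is the shifted-dimension feature of \eqref{eq:h11}: the left-hand exponent lives at dimension $n-m$ while the input is at dimension $n$. For the limsup this causes no trouble, but for the liminf one must realise every large target $n'$ as some $n-m$ with both parameters admissible, which is why the direction of the floor map is reversed in that part. One must also check the hypothesis \eqref{eq:only2}, which in the limit becomes $(1-R(\mu))\mu>1$ and is equivalent to $\mu\in(1,2]$; the boundary case $\mu=1$ is trivial, since $\widehat{w}_n(\xi)\ge n$ always and $S(1)=1$.
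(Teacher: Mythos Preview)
Your proof is correct and follows essentially the same route as the paper: specialise \eqref{eq:h11} with $m\approx rn$, pass to the limit $n\to\infty$ to obtain the function $f_\mu(r)=(1-2r)/[(1-r)(1-r\mu)]$, and optimise in $r$ to get $r=R(\mu)$ and $f_\mu(R(\mu))=S(\mu)$; the hypothesis check $(1-R(\mu))\mu>1$ for $\mu\in(1,2)$ coincides with the paper's verification that $c>1$. Your treatment of the $\underline{\widehat{w}}$ case is in fact more careful than the paper's ``analogously'': you correctly observe that the liminf requires realising \emph{every} large target dimension $n'$ as some $n-m$, and you invert the floor construction accordingly, whereas the paper leaves this implicit.
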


One can verify that the function $S$ induces an increasing bijection from the
interval $[1,2]$ to itself. We compute $S(1.5)=1.0718\ldots$,
$S(1.75)=1.2038\ldots$, $S(1.99)=1.7527\ldots$, $S(1.9999)=1.9721\ldots$.
Corollary~\ref{comp} complements~\cite[Theorem~3.4]{equprin} where reverse
estimates in form of lower bounds for $\underline{\widehat{\lambda}}(\xi),
\overline{\widehat{\lambda}}(\xi)$ in terms of $\underline{\widehat{w}}(\xi),
\overline{\widehat{w}}(\xi)$ respectively were established (formulated there
for ordinary exponents but as stated below the theorem it is true for uniform
exponents as well).

\begin{proof}
    Let $\epsilon>0$ be arbitrary. Then for $n$ large enough we get
    $\lambda:= n\widehat{\lambda}_{n}(\xi) >
    \overline{\widehat{\lambda}}(\xi) - \epsilon$. Fix some $\alpha\in [0,1/2)$ and select $m:= \lfloor n\alpha\rfloor$.
    Define $c$ from the
    equation $\widehat{\lambda}_{n}(\xi)=c/(n-m)$. Then $c =
    (1-\alpha)\lambda + o(1)$ as $n\to\infty$. If $c$ exceeds one, we may apply Theorem~\ref{openup2} to get
    \[
    \frac{\widehat{w}_{n-m}(\xi)}{n-m} \geq \frac{\widehat{\lambda}(\xi) + \frac{n-2m-1}{n-m}}{1-m\widehat{\lambda}(\xi)}=
     \frac{1-2\alpha}{1-(\lambda+1)\alpha+\lambda\alpha^{2}}+o(1), \qquad n\to\infty.
    \]
    In the given range of $\alpha$ the expression is maximized for
    \[
    \alpha= \frac{1}{2}-\frac{\sqrt{2\lambda-\lambda^{2}}}{2\lambda}+o(1),
    \qquad n\to\infty,
    \]
    and inserting gives the first lower bound of the corollary as we may choose $\lambda$ arbitrarily close to $\overline{\widehat{\lambda}}(\xi)$. Finally
    we check that the prerequisite $c>1$ is equivalent to $\lambda+\sqrt{2\lambda-\lambda^{2}}>2$ for small enough $\epsilon$ and large enough $n$. This inequality is verified for $\lambda\in(1,2)$,
    and for $\lambda=1$ our claim holds for trivial reasons. The
    second lower bound follows analogously.
    \end{proof}

\section{Preparatory concepts and the crucial lemma} \label{prepara}

In this section we prepare the proof of Theorem~\ref{openup2}.

\subsection{Minimal points and the key lemma}

We will use the concept of minimal points as for instance used
in~\cite{davsh, laurent}. Let $n\in\mathbb{N}$ and transcendental
real $\xi$ be
given. Consider the simultaneous approximation problem \eqref{eq:tierekt}.
Then $n,\xi$ give rise to a unique (up to sign) sequence of best
approximations
\[
\underline{x}_{i}= \underline{x}_{i}(n,\xi)= (x_{i,0},x_{i,1},\ldots,x_{i,n}), \qquad i\geq 1,
\]
with the property that $L(\underline{x}_{i})$
minimizes $L(\underline{x})$ upon all integer vectors
$\underline{x}=(x,y_{1},\ldots,y_{n})$ with $1\leq x\leq x_{i,0}$.
They have the properties
\[
x_{1,0}< x_{2,0}< \ldots, \qquad L(\underline{x}_{1}) > L(\underline{x}_{2}) > \cdots.
\]
The study of the sequence of minimal points is the basis of many results
regarding exponents of approximation, and we will make use of this concept in
the following key lemma whose proof is an adaption of the method by
Laurent~\cite{laurent}.

\begin{lemma} \label{le1A}
    Let $n\geq 2$ be an integer and $1\leq m\leq \lceil n/2\rceil$ another integer. Let $\xi$ be a transcendental real number that satisfies
    \begin{equation}  \label{eq:only}
    \widehat{\lambda}_{n}(\xi) > \frac{1}{n-m+1}.
    \end{equation}
    Then for any large $i$ the vectors
    \[
    (x_{i,0}, \ldots, x_{i,n-m}),\; (x_{i,1},\; \ldots,\; x_{i,n-m+1}),\; \ldots,\;
    (x_{i,m},\; \ldots,\; x_{i,n})
    \]
    formed from the $i$-th best approximation are linearly independent.
\end{lemma}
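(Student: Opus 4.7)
My plan is to argue by contradiction. Suppose that for some large $i$ the $m+1$ vectors
\[
V_k := (x_{i,k}, x_{i,k+1}, \ldots, x_{i,n-m+k}), \qquad k=0,1,\ldots,m,
\]
are $\mathbb{Z}$-linearly dependent, producing coprime integers $c_0, \ldots, c_m$ (not all zero) with $\sum_{k=0}^m c_k x_{i, j+k} = 0$ for every $j=0,1,\ldots,n-m$. Set $P(T) := \sum_{k=0}^m c_k T^k$, an integer polynomial of degree at most $m$; the aim is to derive a contradiction with the hypothesis $\widehat{\lambda}_n(\xi) > 1/(n-m+1)$.

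First I would show that $P(\xi)$ is very small. Writing $x_{i,r} = \xi^r x_{i,0} + \varepsilon_{i,r}$ with $\varepsilon_{i,0}=0$ and $|\varepsilon_{i,r}| \leq L_i := L(\underline{x}_i)$, and substituting into the dependence, one obtains $\xi^j x_{i,0} P(\xi) = -\sum_k c_k \varepsilon_{i,j+k}$ for each $j$, whence $|P(\xi)| \ll C L_i / x_{i,0}$ with $C := \max_k |c_k|$. Next, applying Cramer's rule to the Hankel-like $(m+1)\times(m+1)$ submatrix $(x_{i,j+k})_{0 \leq j, k \leq m}$ (which is singular, since its rows satisfy the same dependence), the coefficients $c_k$ can be expressed as signed $m \times m$ minors of entries of size $O(x_{i,0})$; this yields $C \ll x_{i,0}^m$ and hence $|P(\xi)| \ll x_{i,0}^{m-1} L_i$.

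From this single small polynomial I would then manufacture $n-m+1$ linearly independent dual integer vectors: the coefficient vectors of the shifted polynomials $T^j P(T)$ for $j=0,1,\ldots,n-m$ lie in $\mathbb{Z}^{n+1}$, have height $\leq C \ll x_{i,0}^m$, and satisfy $|a_0 + a_1\xi + \cdots + a_n\xi^n| \leq |\xi|^j |P(\xi)| \ll x_{i,0}^{m-1} L_i$; they are linearly independent because their leading non-zero positions shift with $j$. Finally, invoking the standard best-approximation inequality $L_i \leq x_{i+1,0}^{-\widehat{\lambda}_n(\xi) + o(1)}$ together with a Minkowski-type argument from parametric geometry of numbers in the spirit of Laurent~\cite{laurent}, the existence of $n-m+1$ small independent dual vectors of the stated size should turn out to be incompatible with the assumed lower bound $\widehat{\lambda}_n(\xi) > 1/(n-m+1)$, producing the sought contradiction.

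I expect this last step — correctly balancing the exponents and extracting the precise geometric obstruction — to be the main technical difficulty, since it requires a genuine transference or successive-minima estimate rather than just estimating a single form. A cleaner alternative, closer in spirit to Davenport--Schmidt~\cite{davsh}, would be to use the shifted polynomials together with the ordering of the sequence of best approximations to directly construct an integer vector violating the minimality property $L(\underline{x}_{i+1}) < L(\underline{x}_i)$, yielding the contradiction through the defining property of best approximations instead of through geometry of numbers.
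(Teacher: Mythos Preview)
Your proposal has a genuine gap at two places.

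First, the height bound $C\ll x_{i,0}^{m}$ coming from Hadamard on an $m\times m$ minor is far too weak. Since $\widehat{\lambda}_n(\xi)\le 2/n$, for $m\ge 2$ the quantity $x_{i,0}^{m-1}L_i$ is at least of order $x_{i,0}^{\,m-1-2/n}\to\infty$, so you have not shown $P(\xi)$ to be small at all, and the shifted linear forms $T^{j}P(T)$ carry no usable information. The paper instead works with the \emph{smallest} $h\le m$ for which the Hankel matrix $V_i(h)$ is rank-deficient; by minimality the $h\times(n-h+2)$ matrix $V_i(h-1)$ has full rank $h$, and subtracting $\xi^{j}$ times the first row from the $(j{+}1)$-st row shows every $h\times h$ minor is $o\bigl(x_{i,0}^{\,1-(h-1)\lambda}\bigr)$. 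Laurent's Lemma~1 then yields $C=o(x_{i,0}^{\lambda})$ whenever $\lambda>1/(n-h+1)$, a vastly sharper bound than $x_{i,0}^{m}$ and the one actually needed.

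Second, even with the correct height bound, the contradiction is not obtained via transference or a successive-minima count, and your description of that step (``should turn out to be incompatible'') is not an argument. The paper's route is entirely different: it \emph{propagates} the relation backwards. From $C=o(x_{i,0}^{\lambda})$ and $L_{i-1}\le x_{i,0}^{-\lambda}$ one sees that the integer $a_0x_{i-1,0}+\cdots+a_h x_{i-1,h}$ is $o(1)$, hence zero, so the same coefficient vector annihilates $V_{i-1}(h)$ as well; in particular $h_{i-1}\le h_i$. Iterating, the sequence $(h_i)$ is eventually constant and all $V_i(h)$ for large $i$ share a common annihilator $\underline{a}$. This forces a fixed integer polynomial $P$ with $x_{i,0}P(\xi)=o(1)$ for all large $i$, hence $P(\xi)=0$, contradicting the transcendence of $\xi$. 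This linking of consecutive best approximations---not any geometry-of-numbers obstruction at a single index---is the heart of the proof, and it is absent from your sketch.
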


It is worth pointing out that the lemma uses a slightly relaxed
restriction on $m$ compared to Theorems~\ref{1A}, \ref{openup2}. The proof of
Theorem~\ref{openup2} will demonstrate that in contrast to the lemma,
indeed some of its claims cannot be extended to the cases $m=\lfloor n/2\rfloor$ or $m=\lceil n/2\rceil$.

\begin{proof}
    Fix the values $n,m$ and $\xi$ from the lemma.
    We follow the proof of Laurent~\cite{laurent}.
    Let $\lambda\in (1/(n-m+1), \widehat{\lambda}_{n}(\xi))$ be fixed for
    now to be specified later.
    For every $i\geq 1$,
    let $h=h_{i}$ be the smallest integer for
    which the $(h+1)\times (n-h+1)$ Hankel matrix
    \[
    V_{i}(h)=\begin{pmatrix}
    x_{i,0} & x_{i,1} & \cdots & x_{i,n-h}\\
    x_{i,1} & x_{i,2} & \cdots & x_{i,n-h+1}\\
    \ddots & \ddots & \ddots & \ddots \\
    x_{i,h} & x_{i,h+1} & \cdots & x_{i,n}
    \end{pmatrix}, \qquad\qquad i\geq 1,
    \]
    has rank at most $h$ (i.e. not full rank). Then the vectors
    $\vz_j:=(x_{i,j},x_{i,j+1}.\ldots, x_{i,j+h-1})$, $j\in\{0,\ldots, n-h+1\}$
    satisfy the recurrence relations
    $$
    a_0\vz_j + a_1\vz_{j+1} + \cdots + a_h\vz_{j+h} = \underline{0}.
    $$
    Now \cite[Lemma~1]{laurent} implies that one can choose integer
    coefficients $a_j$ such that $$\max\{|a_0|, \ldots, |a_h|\} \ll
    Z^{1/(n-2h+2)},$$ where $Z$ denotes the maximum of the absolute values of
    all the $h\times h$ determinants formed from any $h$ of the vectors
    $\vz_j$. On the other hand, by subtracting the first row of such matrix,
    multiplied by $\xi^j$, from the $j$'th row of this matrix, we can verify
    that for large $i$, $Z= o\left(x_{i,0}^{1-(h-1)\lambda}\right)$. Then it is easy to
    check that for
    \[
    \lambda > \frac{1}{n-h_{i}+1}
    \]
    one has $\max\{|a_0|, \ldots, |a_h|\}  = o(x_{i,0}^\lambda).$

    Consider the polynomial $P_i(z):= a_0+a_1z+\ldots + a_h z^h$. One
    notices that
    $$
    |x_{i,0} P_i(\xi)| = |a_1(x_{i,0}\xi - x_{i,1}) + a_2(x_{i,0}\xi^2 - x_{i,2}) + \ldots + a_h(x_{i,0}\xi^h-x_{i,h})| = o(1).
    $$
    Now consider $|a_0 x_{i-1,0} + a_1x_{i-1,1} + \cdots + a_hx_{i-1,h}|$. As
    before, it equals
    $$
    |x_{i-1,0}P_i(\xi) + a_1(x_{i-1,1} - x_{i-1,0}\xi) + \cdots + a_h(x_{i-1,h} - x_{i-1,0}\xi^h)| = o(1),
    $$
and because it must be an integer, we have that for $i\ge i_0$ large enough,
this expression equals zero.

    Now suppose there exists an infinite strictly increasing sequence
    $(i_k)_{k\in \NN}$ of indices such that $i_1\ge i_0$ and $h_{i_k}\le m$. We obtain that there is an integer vector $\underline{a}_{k}= (a_0, a_1, \ldots,
    a_h)$ which annihilates both matrices $V_{i_k}(h)$ and $V_{i_k-1}(h)$,
    i.e. $V_{i_k}(h)\cdot \underline{a}_{k}= V_{i_k-1}(h)\cdot
    \underline{a}_{k}=\underline{0}$. In particular, it implies that
    $h_{i_k-1}\le h_{i_k}$. By applying the same arguments iteratively to $V_{i_k-1}(h)$, $V_{i_k-2}(h)$ and so on, we get that $h_i\le m$ and $h_{i-1}\le h_i$ for all $i\ge i_0$. Since $h_i$ can not be arbitrarily large, the sequence $h_i$ is ultimately
    constant. In other words, for large $i$ we have $h_i = h\le m$. We further
    derive that for such large $i$ there is a vector $\va$ which does not depend on $i$ and
    annihilates all matrices $V_i(h)$. But that means there is a linear dependence between
    $1,\xi, \xi^2,\ldots, \xi^h$ which contradicts the assumption
    that $\xi$ is transcendental.
    \end{proof}

\begin{example}
    Let $n\geq 2$ and $m=1$. The lemma claims that as soon as $\widehat{\lambda}_{n}(\xi)>1/n$ the vectors $(x_{i,0}, x_{i,1}, \ldots, x_{i,n-1})$ and $(x_{i,1}, x_{i,2}, \ldots, x_{i,n})$ are linearly independent for large $i$. Notice that the condition is necessary. Indeed, any
    number $\xi$ with $\lambda_{1}(\xi)>2n-1$ (or equivalently $\lambda_{n}(\xi)>1$, see~\cite[Theorem~1.6]{ich})
    has infinitely many $\underline{x}_{i}$ with constant ratios $x_{i,j+1}/x_{i,j}, 0\leq j\leq n-1$, and thus the claim fails, see~\cite[Lemma~2.3]{ich} and
    also~\cite[Lemma~1]{bu}. Lemma~\ref{le1A} thus gives a new proof
    that such numbers satisfy $\widehat{\lambda}_{n}(\xi)=1/n$ (this statement is
    already contained in~\cite[Theorem~1.6]{ich}).
\end{example}

We state an easy consequence of the lemma.

\begin{corollary} \label{ccoo}
    Let $m,n,\xi$ be as in Lemma~\ref{le1A}, and assume \eqref{eq:only} holds.
    Then for any
    $\lambda< \widehat{\lambda}_{n}(\xi)$
    and any large $X$ the system
    \begin{equation} \label{eq:dassystem}
    1\leq |x|\leq X, \qquad \quad \max_{1\leq j\leq n-m} |x\xi^{j}-y_{j}| \leq X^{-\lambda}
    \end{equation}
    has $m+1$ linearly independent solutions in integer vectors
    $(x,y_{1},\ldots,y_{n-m})$. Similarly, for any $\lambda< \lambda_{n}(\xi)$ the system \eqref{eq:dassystem}
	has $m+1$ linearly independent solutions for some arbitrarily large $X$ .
\end{corollary}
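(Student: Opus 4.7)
The plan is to derive the corollary as a fairly direct consequence of Lemma~\ref{le1A} combined with standard estimates on the minimal points $\underline{x}_i = (x_{i,0},\ldots,x_{i,n})$. From each such vector I would form the $m+1$ shifted truncations
\[
\underline{v}_i^{(k)} := (x_{i,k},\, x_{i,k+1},\, \ldots,\, x_{i,k+n-m}) \in \mathbb{Z}^{n-m+1}, \qquad k = 0,1,\ldots,m,
\]
and propose these as the required solutions, interpreting the first coordinate of $\underline{v}_i^{(k)}$ as $x := x_{i,k}$ and the remaining ones as $y_j := x_{i,k+j}$. Their linear independence for $i$ sufficiently large is precisely the content of Lemma~\ref{le1A}; this is the only place where hypothesis \eqref{eq:only} is consumed.

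Second, I would check that each $\underline{v}_i^{(k)}$ actually satisfies the approximation bound in \eqref{eq:dassystem}. Using $|x_{i,\ell} - \xi^{\ell} x_{i,0}| \leq L(\underline{x}_i)$ for $\ell = k$ and $\ell = k+j$, the triangle inequality yields
\[
|x\xi^j - y_j| = \bigl|\xi^j (x_{i,k} - \xi^k x_{i,0}) - (x_{i,k+j} - \xi^{k+j} x_{i,0})\bigr| \leq C_1 \, L(\underline{x}_i),
\]
for all $1 \leq j \leq n-m$, where $C_1$ is a constant depending only on $\xi, n, m$. By the same reasoning $|x_{i,k}| \leq C_2\, x_{i,0}$ for a constant $C_2 = C_2(\xi,m)$.

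The technical heart of the argument is then the matching of a prescribed large $X$ with a suitable index $i$. Given $X$, I would pick $i$ maximal with $C_2 x_{i,0} \leq X$, so that $|x_{i,k}| \leq X$ for all $k \leq m$; by maximality, $Y := \lfloor X/C_2 \rfloor$ lies in $[x_{i,0}, x_{i+1,0})$. Fixing $\lambda' \in (\lambda, \widehat{\lambda}_n(\xi))$ and applying the definition of $\widehat{\lambda}_n(\xi)$ to this $Y$ gives $L(\underline{x}_i) \leq Y^{-\lambda'}$ for $X$ large. Absorbing the constants $C_1$ and $C_2^{\lambda'}$ into the gap $X^{\lambda'-\lambda}$ then produces $C_1 L(\underline{x}_i) \leq X^{-\lambda}$, completing the first claim.

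For the second claim, the same construction applies verbatim, but now one invokes the definition of $\lambda_n(\xi)$: for any $\lambda' < \lambda_n(\xi)$ there are infinitely many indices $i$ for which $L(\underline{x}_i) \leq x_{i+1,0}^{-\lambda'}$. Along this subsequence I would set $X_i$ of order $x_{i+1,0}$, and the same shifted truncations yield $m+1$ linearly independent solutions of \eqref{eq:dassystem} for arbitrarily large $X_i$. I expect no genuine obstacle beyond keeping track of the implicit constants and verifying that the $Y$-to-$i$ correspondence always lands $Y$ in the correct best-approximation interval; all of the real content sits in Lemma~\ref{le1A}.
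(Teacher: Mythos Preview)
Your proposal is correct and follows essentially the same approach as the paper: form the $m+1$ shifted truncations of a minimal point $\underline{x}_i$, bound $|x_{i,k}\xi^{j}-x_{i,k+j}|$ by a constant times $L(\underline{x}_i)$ via the triangle inequality, and invoke Lemma~\ref{le1A} for their linear independence. The only cosmetic difference is that the paper tracks explicit constants (defining $Y=X/(2\max\{1,\xi^{n}\})$ and a specific $c$), whereas you absorb the constants by passing through an intermediate exponent $\lambda'\in(\lambda,\widehat{\lambda}_n(\xi))$; both devices are standard and interchangeable.
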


\begin{proof}
        Without loss of generality assume $\xi>0$ to avoid writing absolute values. Let
        \[
        c=\frac{1}{2\max\{1,\xi^{n}\}(1+(\xi+\xi^{-1})^{n-1})}.
        \]
    Let $\lambda< \widehat{\lambda}_{n}(\xi)$ be fixed for the moment.
    Let $X>0$ be arbitrary large. For simplicity define the auxiliary parameter $Y=X/(2\max\{1,\xi^{n}\})$. Then the system
    \[
    1\leq x\leq Y, \qquad \max_{1\leq j\leq n} | x\xi^{j}-y_{j}| < cY^{-\lambda}
    \]
    has a solution in integers $(x,y_{1},\ldots,y_{n})\in \mathbb{Z}^{n+1}$ which can be chosen one of the best approximation vectors $\underline{x}_{i}(n,\xi)$.
    For $1\leq i\leq m$ and $1\leq j\leq n-m$ we have
    \begin{align*}
    | y_{i}\xi^{j}-y_{i+j}|&= | (y_{i}\xi^{j}- x\xi^{i+j}) + (x\xi^{i+j}- y_{i+j})|
    \leq cY^{-\lambda}(1+\xi^j)\\
    &\leq \frac{Y^{-\lambda}}{2\max\{1,\xi^{n}\}}\leq \frac{Y^{-\lambda}}{(2\max\{1,\xi^{n}\})^{\lambda}}=X^{-\lambda}.
    \end{align*}
    We conclude
    \[
    \max_{1\leq j\leq n-m} | x\xi^{j}-y_{j}|  < X^{-\lambda}, \qquad \max_{1\leq j\leq n-m} | y_{i}\xi^{j}-y_{i+j}|< X^{-\lambda}
    \]
    and
    \[
    \max\{|x|,|y_{1}|,\ldots,|y_{n-m}|\} < 2\max\{1,\xi^{n}\} Y=X.
    \]
    This shows that the vectors $(x,y_{1},\ldots,y_{n-m}),
    (y_{1},y_{2}\ldots,y_{n-m+1}),
    \ldots$, $(y_{m},y_{m+1},\ldots,y_{n})$ satisfy the
    estimates~\eqref{eq:dassystem}. Moreover
    they are linearly independent by Lemma~\ref{le1A}.
    The first claim follows. The second claim on the ordinary exponents
	$\lambda_{n}(\xi)$ is derived very similarly by
	considering minimal points $(x,y_{1},\ldots,y_{n})$ as in the
	definition of $\lambda_{n}(\xi)$ and putting $X=x$.
    \end{proof}

Let $N\geq 1$ be an integer. For any integer $l\in \{1, 2,\ldots, N+1\}$, define the
successive minima exponents $\lambda_{N,l}(\xi)$ as the supremum of $\lambda$
so that
\[
1\leq x\leq X, \qquad \max_{1\leq i\leq N}|x\xi^{i}-y_{i}|< X^{-\lambda}
\]
has $l$ linearly independent solution vectors $(x,y_{1},\ldots,y_{N})$ for
arbitrarily large $X$. Similarly define $\widehat{\lambda}_{N,l}(\xi)$ with
the inequalities having $l$ solutions for all large $X$. Accordingly, define
$w_{N,l}(\xi)$ and $\widehat{w}_{N,l}(\xi)$ for the linear form problem. Notice that
$\lambda_{N,1}(\xi)= \lambda_{N}(\xi)$ and  $\widehat{\lambda}_{N,1}(\xi)=
\widehat{\lambda}_{N}(\xi)$, as well as $w_{N,1}(\xi)= w_{N}(\xi)$ and
$\widehat{w}_{N,1}(\xi)= \widehat{w}_{N}(\xi)$ just recover the classical
exponents. As $\lambda$ can be chosen arbitrarily close to
$\widehat{\lambda}_{n}(\xi)$ in the first claim of Corollary~\ref{ccoo}, first asserts that, if~\eqref{eq:only} is satisfied, we have
\begin{equation} \label{eq:ceq}
\widehat{\lambda}_{n-m, m+1}(\xi) \geq \widehat{\lambda}_{n}(\xi)>\frac{1}{n-m+1}.
\end{equation}
Similarly the second claim upon ~\eqref{eq:only} reads
\begin{equation} \label{eq:ceq2}
\lambda_{n-m, m+1}(\xi) \geq \lambda_{n}(\xi)>\frac{1}{n-m+1}.
\end{equation}
These inequalities are important ingredients in the proof of
Theorem~\ref{openup2}.

\subsection{Parametric geometry of numbers} \label{pgn}
We give a very brief exposition of the concept of parametric geometry of
numbers due to Schmidt and Summerer~\cite{ss,ssmh}, where we only provide the
necessary results for this paper and refer to the quoted papers for more
details. Let $N\geq 1$ be an integer and $1\leq l\leq N+1$. Given $\xi$,
define $\psi_{N,l}(Q)$ as the supremum of exponents $\mu$ for which
\[
1\leq |x|\leq Q^{1+\mu}, \qquad \max_{1\leq i\leq N} |\xi^{i}x-y_{i}|\leq Q^{-1/N+\mu}
\]
has $l$ linearly independent integer vector solutions
$(x,y_{1},\ldots,y_{N})$. Let
\[
\underline{\psi}_{N,l}= \liminf_{Q\to\infty} \psi_{N,l}(Q), \qquad
\overline{\psi}_{N,l}= \limsup_{Q\to\infty} \psi_{N,l}(Q).
\]
Similarly, one can define the dual function $\psi_{N,l}^{\ast}(Q)$ as the
supremum of exponents $\mu$ such that the system of inequalities
$$
\max\{ |x_0|,|x_1|,\ldots, |x_N|\} \le Q^{1/N + \mu};\qquad |x_0+x_1\xi + \ldots + x_N\xi^N|<Q^{-1+\mu}
$$
has $l$ linearly independent integer vector solutions. The values
$\underline{\psi}_{N,l}^{\ast}, \overline{\psi}_{N,l}^{\ast}$ are then
defined analogously to $\underline{\psi}_{N,l}$ and $\overline{\psi}_{N,l}$.
As pointed out in~\cite[Equation~(4.11)]{ss}, Mahler's Duality Theorem on
Dual Convex bodies translates into
\begin{equation}  \label{eq:mahla}
\underline{\psi}_{N,l}^{\ast}= -\overline{\psi}_{N,N+2-l}, \qquad
\overline{\psi}_{N,l}^{\ast}= -\underline{\psi}_{N,N+2-l}.
\end{equation}
We further require the estimates from~\cite[(1.11)]{ssmh}:
\begin{equation} \label{eq:sses}
l\overline{\psi}_{N,l}+(N+1-l)\underline{\psi}_{N,N+1}\geq 0,\quad l\underline{\psi}_{N,l}+(N+1-l)\overline{\psi}_{N,N+1}\geq 0.
\end{equation}
and the relation from~~\cite[(1.15)]{ssmh}:
\begin{equation} \label{eq:untob}
\underline{\psi}_{N,l+1}\leq \overline{\psi}_{N,l}.
\end{equation}
To build a connection with Corollary~\ref{ccoo} we recall that these
quantities are related to the successive minima exponents $\lambda_{N,l},
\widehat{\lambda}_{N,l}$ from the previous section by identities. For
simplicity we drop the argument $\xi$ of the exponents in the following. All
claims below hold for any $1\leq l\leq N+1$. By~\cite[Theorem~1.4]{ss} (for
$l=1$, but the same argument also holds for larger $l$, see also~\cite{j2})
we have the identities
\[
(1+\lambda_{N,l})\cdot (1+\underline{\psi}_{N,l})= \frac{N+1}{N},
\]
and
\begin{equation}  \label{eq:fehlt2}
(1+\widehat{\lambda}_{N,l})\cdot (1+\overline{\psi}_{N,l})= \frac{N+1}{N}.
\end{equation}
Similarly for the dual linear form problem we have
\begin{equation}  \label{eq:fehlt3}
(1+w_{N,l})\cdot \left(\underline{\psi}_{N,l}^{\ast}+\frac{1}{N}\right)= \frac{N+1}{N},
\end{equation}
and
\begin{equation}  \label{eq:fehlt4}
(1+\widehat{w}_{N,l})\cdot \left(\overline{\psi}_{N,l}^{\ast}+\frac{1}{N}\right)= \frac{N+1}{N}.
\end{equation}

\section{Proof of Theorem~\ref{openup2}}
With aid of the results from Section~\ref{prepara}
we can prove Theorem~\ref{openup2}.

\begin{proof}[Proof of Theorem~\ref{openup2}]
    For all quotations of formulas in Section~\ref{pgn} below we let $N=n-m$.
    Note that the assumption \eqref{eq:only2} is stronger than \eqref{eq:only}.
    Therefore Corollary~\ref{ccoo} can be applied. Its claim
    \eqref{eq:ceq}, when combined with \eqref{eq:fehlt2} for $l=m+1$, implies
    \[
    \overline{\psi}_{n-m,m+1} \leq \frac{1-(n-m)\widehat{\lambda}_{n}(\xi)}{(n-m)(1+\widehat{\lambda}_{n}(\xi))}.
    \]
    On the other hand, since $2m<n$ by assumption, equations~\eqref{eq:mahla}
    and~\eqref{eq:sses} for $l=m+1$  give
    \[
    \overline{\psi}^{\ast}_{n-m,1}= - \underline{\psi}_{n-m,n-m+1} \leq \frac{m+1}{n-m+1-(m+1)}\cdot \overline{\psi}_{n-m,m+1}= \frac{m+1}{n-2m}\cdot \overline{\psi}_{n-m,m+1}.
    \]
    After inserting the above bound for $\overline{\psi}_{n-m,m+1}$ and
    simplifying the expression we get
    $$
    \overline{\psi}^{\ast}_{n-m,1} + \frac{1}{n-m} \le \frac{(n-m+1)(1-m\widehat{\lambda}_{n}(\xi))}{(n-m)(n-2m)(1+\widehat{\lambda}_{n}(\xi))}.
    $$
    Then \eqref{eq:h11} follows from~\eqref{eq:fehlt4} with $l=1$.

    Similarly, \eqref{eq:untob} yields
    \[
    \underline{\psi}_{n-m,m+2} \leq \overline{\psi}_{n-m,m+1} \leq \frac{1-(n-m)\widehat{\lambda}_{n}(\xi)}{(n-m)(1+\widehat{\lambda}_{n}(\xi))}
    \]
    and \eqref{eq:mahla}, \eqref{eq:sses} together with the assumption $2m+1<n$
    again implies
    \begin{align*}
    \underline{\psi}^{\ast}_{n-m,1}&= - \overline{\psi}_{n-m,n-m+1} \leq \frac{m+2}{n-m+1-(m+2)}\cdot \underline{\psi}_{n-m,m+2} \\
    &\leq \frac{m+2}{n-2m-1}\cdot\frac{1-(n-m)\widehat{\lambda}_{n}(\xi)}{(n-m)(1+\widehat{\lambda}_{n}(\xi))}.
    \end{align*}
    Then \eqref{eq:fehlt3} gives the stated left lower
    bound~\eqref{eq:h21} for $w_{n-m}(\xi)$. The right bound for $w_{n-m}(\xi)$
	follows similarly as \eqref{eq:h11} in view of \eqref{eq:ceq2} that is equivalent
	to the second claim of Corollary~\ref{ccoo}.

    Finally, \eqref{eq:windag} follows by combining \eqref{eq:h11}
    with the estimate
    \[
    \min\{ w_{m+1}(\xi), \widehat{w}_{n-m}(\xi)\} \leq \frac{1}{\widehat{\lambda}_{n}(\xi)}
    \]
    derived from~\cite[Theorem~2.1]{indag}. Indeed, we have $\widehat{\lambda}_{n}(\xi) \le 2/n<1/m$. Hence the assumptions~\eqref{eq:only2}
    and~\eqref{eq:h11} imply that $\widehat{w}_{n-m}(\xi)$ is larger than $1/\widehat{\lambda}_{n}(\xi)$, thus the left term in the minimum cannot exceed it.
\end{proof}

\section{Proof of Theorems~\ref{1B} and~\ref{1A}}

\subsection{Two relations between Diophantine exponents} \label{twor}

In this section we recall estimates linking $w_{n}^{\ast}$ with other
exponents of approximation. They will be required in the proofs of the main
results. Firstly, from~\cite[Theorem~2.7]{buschlei} any transcendental real
$\xi$ satisfies
\begin{equation} \label{eq:toll}
w_{n}^{\ast}(\xi) \geq \frac{3}{2}\widehat{w}_{n}(\xi)-n+\frac{1}{2}, \qquad n\geq 1.
\end{equation}
We will apply \eqref{eq:toll} for the index $n-m$ in context of Theorem~\ref{1A} for its proof. We lack analogues of \eqref{eq:toll}
for the modified versions of Wirsing's problem discussed at the
end of Section~\ref{int01}.
Therefore we cannot extend our results to these situations.
Secondly, a small variation of~\cite[Lemma~1]{davsh} implies the relation
\begin{equation} \label{eq:davschm}
w_{n}^{\ast}(\xi) \geq \frac{1}{\widehat{\lambda}_{n}(\xi)}, \qquad n\geq 1.
\end{equation}
In fact, Lemma~1
from~\cite{davsh} provides a lower estimate for $w_{n+1}^{\ast}(\xi)$ instead
of $w_{n}^{\ast}(\xi)$, however it is well-known to hold for the latter as
well, see for example~\cite{j2} or~\cite{buteu}.

For the sake of completeness, we state the related inequalities
\begin{equation}  \label{eq:toller}
w_{n}^{\ast}(\xi) \geq \frac{w_{n}(\xi)+1}{2}, \qquad
w_{n}^{\ast}(\xi) \geq w_{n}(\xi)-n+1, \qquad w_{n}^{\ast}(\xi) \geq \frac{\widehat{w}_{n}(\xi)}{\widehat{w}_{n}(\xi)-n+1}
\end{equation}
by Wirsing~\cite{wirsing}, Bugeaud~\cite[Lemma~1A]{bugbuch} and Bugeaud and
Laurent~\cite{bula} respectively.
Many of the above inequalities directly show the lower bounds
\[
\overline{w}^{\ast}(\xi) \geq \underline{w}^{\ast}(\xi) \geq \frac{1}{2}.
\]
However, as indicated in the introduction, no improvement of the
constant $1/2$ even for the larger quantity
$\overline{w}^{\ast}(\xi)$ seems obvious from previous results.
For our method, to improve $\underline{w}^{\ast}(\xi) \geq \frac{1}{2}$
it is essential to use
\eqref{eq:toll}, the bounds in \eqref{eq:toller} are insufficient.
On the other hand, the left estimate in \eqref{eq:toller}
would imply $\overline{w}^{\ast}(\xi)>1/2$ when utilized in the framework below.

\subsection{Deduction of the main results}

\begin{proof}[Proof of Theorem~\ref{1A}]
    Let $m,n$ and $\xi$ be as in the theorem.
    First assume inequality \eqref{eq:only} holds. Then we apply Theorem~\ref{openup2}
    which together with \eqref{eq:toll} for index $n-m$ yields
    \begin{equation} \label{eq:kuerzer}
    w_{n}^{\ast}(\xi)\geq w_{n-m}^{\ast}(\xi)\geq
    \frac{3}{2}\cdot\frac{(n-m)\widehat{\lambda}_{n}(\xi)+n-2m-1}{1-m\widehat{\lambda}_{n}(\xi)}-(n-m)+\frac{1}{2}.
    \end{equation}

    Denote the right hand side by $\tau=\tau_{m,n}(\widehat{\lambda}_{n}(\xi))$.
    Regardless if \eqref{eq:only} holds or not,
    the estimates \eqref{eq:davschm} and~\eqref{eq:kuerzer}
    together imply
    \begin{equation}\label{eq:eq23}
    w_{n}^{\ast}(\xi)\geq \max\left\{ \tau\cdot \underline{1}_{\left(\frac{1}{n-m+1},1\right)}(\widehat{\lambda}_{n}(\xi)),\; \frac{1}{\widehat{\lambda}_{n}(\xi)}\right\},
    \end{equation}
    where $\underline{1}_{I}(t)$ denotes the indicator function of an interval $I$. The first term in the maximum is rising as a function of $\widehat{\lambda}_{n}(\xi)$ on $[1/n,1/m)$, while the second term is obviously decreasing.
    It is easy to check that for $\widehat{\lambda}_{n}(\xi)=1/(n-m+1)$
    and slightly larger values
    the right term prevails (since then $\tau>1/\widehat{\lambda}(\xi)$),
    while for
    $\widehat{\lambda}_{n}(\xi)=1/m$ the left term becomes bigger (it
    actually tends monotonically to infinity). Therefore the minimum of the right hand side
    of~\eqref{eq:eq23} is attained when the expressions are equal. This happens when $\widehat{\lambda}_{n}(\xi)$ solves the quadratic equation in $\lambda$:
    \begin{equation}\label{eq24}
    (2mn-2m^2 +3n-4m)\lambda^2 + (n-2m-2)\lambda -2=0.
    \end{equation}
    The reciprocal of this equilibrium
    value, according to \eqref{eq:eq23}, can readily be calculated as the lower bound in
    Theorem~~\ref{1A}.
    \end{proof}

    %
    %
    %

The lower bound in Theorem~\ref{1A} can be slightly improved if instead
of~\eqref{eq:toll} one uses the stronger estimate
\[
w_{n}^{\ast}(\xi) \geq \frac{w_{n}(\xi)}{2}+\widehat{w}_{n}(\xi)-n+\frac{1}{2}, \qquad n\geq 1,
\]
which holds as soon as $w_{n}^{\ast}(\xi)\leq n$. The last inequality can be
derived by applying the proof in~\cite{buschlei}. Using the left bound in
\eqref{eq:h21} we indeed obtain an improvement.

Now, Theorem~\ref{1B} follows from Theorem~\ref{1A} with a proper choice of
the parameter $m$.

\begin{proof}[Proof of Theorem~\ref{1B}]
    Write
    \[
    \Phi(u,v)= \frac{4uv+6v-4u^{2}-8u}{2u+2-v+\sqrt{v^{2}+12uv+20v-12u^{2}-24u+4}}
    \]
    so that $\Phi(m,n)$ is the bound in Theorem~\ref{1A}.
If we fix $v$ and write $u=\alpha v$ for $\alpha\in(0,1/2)$, then we obtain a
bound of order $\Phi(u,v)\geq F(\alpha)v+o(v)$ as $v\to\infty$, for the
function
\[
F(t)= \frac{4t-4t^{2}}{2t-1+\sqrt{1+12t-12t^{2}}}.
\]
By differentiation one can check that $F(t)$ is maximized for
$\alpha=\alpha_{0}:=(3-\sqrt{3})/6=0.2113\ldots$ with maximum
$F(\alpha_{0})=\beta:=1/\sqrt{3}$. For given $v=n$, if we take $m=\lfloor
n\alpha_0\rfloor$ then the quotient $\Phi(m,n)/n$ will be arbitrarily close
to $\Phi(\alpha_{0} n,n)/n$ for large enough $n$. By Theorem~\ref{1A} and
continuity of $F$ we infer $w_{n}^{\ast}(\xi)/n\geq
F(\alpha_{0})-o(1)=\beta-o(1)$ as $n\to\infty$.

Next, we need to show that for all $n\ge 4$ there exists $m<(n-1)/2$ such
that $\Phi(m,n)/n$ exceeds $\beta$. This is equivalent to saying that for the
same values $n$ and $m$ the solution $\lambda$ of~\eqref{eq24} is less than
$\sqrt{3}/n$. By substituting $\lambda = \sqrt{3}/n$ into the left hand side
of the equation we get
$$
-\left(\frac{3-\sqrt{3}}{\sqrt{6}}n -
\sqrt{6} m\right)^2 + (9-2\sqrt{3})n- 12m.
$$
For $m=\lfloor n\alpha_0\rfloor$, the square part of this expression is at
least $-6$ while the remaining part is at least
$$
-2(3-\sqrt{3})n + (9-2\sqrt{3})n = 3n \ge 12.
$$
Therefore the expression is positive and therefore for $m=\lfloor
n\alpha_0\rfloor$, the values $\Phi(m,n)/n$ are larger than $\beta$.

We finally settle \eqref{eq:obenba}. We will show that for any $\epsilon>0$
and large $n\geq n_{0}(\epsilon)$, with $m=\lfloor\gamma n\rfloor$ for a
certain $\gamma$  and $s=n-m$ we have
\begin{equation} \label{eq:reicht}
\max\left\{ \frac{w_{n}^{\ast}(\xi)}{n}, \frac{w_{s}^{\ast}(\xi)}{s}\right\} > \delta-\epsilon,
\end{equation}
with $\delta$ defined in the theorem.
This clearly implies the claim.

Let $\gamma\in(0,1/2)$ be a parameter. Choose
 $m=\lfloor\gamma n\rfloor$ and denote
 $c=n\widehat{\lambda}_{n}(\xi)$. Clearly $c\in[1,2]$ by Dirichlet's Theorem and~\cite{laurent}.
On the one hand, \eqref{eq:davschm} implies $w_{n}^{\ast}(\xi)/n\geq c^{-1}$
, on the other hand by our choice of $\gamma$ we may apply
Theorem~\ref{openup2} and again derive a similar estimate to
\eqref{eq:kuerzer}. Putting negligible terms in a remainder term yields
\[
\frac{w_{s}^{\ast}(\xi)}{s}=\frac{w_{n-m}^{\ast}(\xi)}{n-m}\geq \frac{3}{2}\cdot \left(\frac{1-2\gamma}{(1-\gamma)(1-c\gamma)}\right)-1-o(1),
\qquad n\to\infty.
\]
Thus for every parameter $\gamma\in(0,1/2)$ we have
\[
\max\left\{ \frac{w_{n}^{\ast}(\xi)}{n}, \frac{w_{s}^{\ast}(\xi)}{s}\right\} \geq \min_{c\in[1,2]} \max\left\{\frac{1}{c},\; \frac{3}{2}\cdot \left(\frac{1-2\gamma}{(1-\gamma)(1-c\gamma)}\right)-1 \right\} -o(1),
\]
as $n\to\infty$. For given $\gamma$ the minimum of
the inner
maximum is obtained
when the expressions are equal, that is for
\[
c=c(\gamma)=\frac{2\gamma^{2}+2\gamma-1+\sqrt{4\gamma^{4}+24\gamma^{3}-32\gamma^{2}+12\gamma+1}}{4(\gamma-\gamma^{2})}
\]
obtained as a solution of a quadratic equation. Observe that the right hand
side is $1/G(\gamma)$ with $G$ defined in \eqref{eq:G}. Matlab calculations
show that the reciprocal $1/c(\gamma)$ is maximized over $\gamma\in(0,1/2)$
for a numerical value $\gamma_{0}=0.2345\ldots$ which by differentiation can
be checked to be a root of the irreducible quartic $Q(t)=4t^4 - 12t^3 + 10t^2
- 6t + 1$, yielding a bound $\delta=G(\gamma_{0})>0.6408$ thereby
verifying~\eqref{eq:reicht}.
\end{proof}

$\boldsymbol{Acknowledgements}$: Foundations of the results in this paper
were established at the collaborative
workshop ''Ergodic theory, Diophantine approximation and related topics'' at MATRIX institute in Creswick, Australia in June 2019.

\bigskip
\noindent Dzmitry Badziahin\\ \noindent The University of Sydney\\
\noindent Camperdown 2006, NSW (Australia)\\
\noindent {\tt dzmitry.badziahin@sydney.edu.au}

\bigskip
\noindent Johannes Schleischitz\\ \noindent Middle East Technical
University\\
\noindent Northern Cyprus Campus\\
\noindent Kalkanlı, Güzelyurt, KKTC\\
\noindent via Mersin 10, Turkey\\
\noindent{\tt johannes.schleischitz@univie.ac.at}

\end{document}